\pgfplotsset{compat=1.10}
\newtheorem{theorem}{Theorem}
\newtheorem{definition}{Definition}
\newtheorem{proposition}{Proposition}
\newtheorem{lemma}{Lemma}
\newtheorem{corollary}{Corollary}
\newtheorem{remark}{Remark}
\newtheorem{standing}{Standing Assumption}
\newtheorem{problem}{Problem}
\newcommand{\R}{\mathbb{R}}
\newcommand{\N}{\mathbb{N}}
\newcommand{\mc}{\mathcal}
\newcommand{\prob}{\mathbb{P}}
\newcommand{\diag}{\mathrm{diag}}
\newcommand{\col}{\mathrm{col}}
\newcommand{\rank}{\mathrm{rank}}
\newcommand{\bs}{\boldsymbol}
\newcommand{\bsone}{\boldsymbol{1}}
\newcommand\oprocendsymbol{\hbox{$\square$}}
\newcommand\oprocend{\relax\ifmmode\else\unskip\hfill\fi\oprocendsymbol}
\newacronym{LP}{LP}{linear program}
\newacronym{DT}{DT}{discrete-time}
\newacronym{LTI}{LTI}{linear time-invariant}
\newglossaryentry{LMI}
{
	name={LMI},
	description={linear matrix inequality},
	first={\glsentrydesc{LMI} (\glsentrytext{LMI})},
	plural={LMIs},
	descriptionplural={linear matrix inequalities},
	firstplural={\glsentrydescplural{LMI} (LMIs)}
}
\newacronym{iid}{i.i.d.\@}{independent and identically distributed}
\newacronym{wrt}{w.r.t.\@}{with respect to}
\begin{document}

\title{Probabilistic stabilizability certificates for a class of black-box linear systems}

\author{
Filippo Fabiani, Kostas Margellos and Paul J. Goulart
\thanks{The authors are with the Department of Engineering Science, University of Oxford, OX1 3PJ, United Kingdom {\tt \footnotesize (\{filippo.fabiani, kostas.margellos, paul.goulart\}@eng.ox.ac.uk)}. This work was partially supported through the Government’s modern industrial strategy by Innovate UK, part of UK Research and Innovation, under Project LEO (Ref. 104781)
}
}

\thispagestyle{empty}
\pagestyle{empty}

\maketitle         
\begin{abstract}
We provide out-of-sample certificates on the controlled invariance property of a given set with respect to a class of black-box linear systems. Specifically, we consider linear time-invariant models whose state space matrices are known only to belong to a certain family due to a possibly inexact quantification of some parameters. By exploiting a set of realizations of those undetermined parameters, verifying the controlled invariance property of the given set amounts to a linear program, whose feasibility allows us to establish an a-posteriori probabilistic certificate on the controlled invariance property of such a set with respect to the nominal linear time-invariant dynamics. The proposed framework is applied to the control of a networked system with unknown weighted graph.
\end{abstract}


\section{Introduction}

Guaranteeing the existence of a feedback control law capable of enforcing state constraints is essential for many control systems. A well-established paradigm in the systems-and-control community requires one to verify the controlled invariance property of a certain set, thus certifying the existence of feasible control inputs that do not allow system trajectories, initialized within the set, to escape from that set (see, e.g., \cite{blanchini1999set,blanchini2015set} for a detailed discussion on the topic). 

In contrast with traditional model-based approaches, data-driven and learning techniques for control invariance and stabilizability problems have recently been attracting significant attention \cite{coppens2020data,dai2020semi,bisoffi2020controller}. Among them, a certain line of research leverages randomized methods for (controlled) invariance set estimation and set-membership verification
\cite{chen2018data,zeng2018sample,wang2019data,wang2020data,fiacchini2020probabilistic,gao2020computing,alamo2019safe,mammarella2021chance,wang2020scenario}.

Specifically, a data-driven algorithm to approximate the minimal robust control invariant set \gls{wrt} an uncertain system, albeit without invariance guarantees for unseen dynamics, was proposed in \cite{chen2018data}. 
In \cite{zeng2018sample}, the Koopman operator and the dynamic mode decomposition were used to reconstruct invariant sets for nonlinear systems by relying on a few data snapshots only. Following the same theme, data-driven methods to compute either polyhedral or maximal invariant sets with probabilistic guarantees for \gls{DT} black-box systems were presented in \cite{wang2019data,wang2020data}. 
By relying on partial knowledge of the system model, \cite{fiacchini2020probabilistic} proposed an optimization-based procedure to compute probabilistic reachable sets for linear systems affected by stochastic disturbances, while the concept of stochastic invariance for control systems through probabilistic controlled invariant sets was introduced and thoroughly investigated in~\cite{gao2020computing}. 
Randomized approaches to estimate chance-constrained sets with probabilistic guarantees, frequently encountered in control, were discussed in \cite{alamo2019safe,mammarella2021chance}. Finally, a scenario-based set invariance verification approach for black-box systems was proposed in \cite{wang2020scenario}, where the observation of system trajectory snapshots allowed to compute almost-invariant sets enjoying theoretical probabilistic invariance certificates. 

Similarly to \cite{wang2020scenario}, we investigate a scenario-based approach for the verification of the controlled invariance property of a given set. Unlike the aforementioned literature, we consider a \gls{DT} \gls{LTI} system whose nominal dynamics, described by the pair matrices $(\bar{A}, \bar{B})$, is unknown, though belonging to a certain family $\{(A(\delta), B(\delta))_{\delta \in \Delta}\}$ due to a possibly inexact quantification of some parameters, encoded by a vector $\delta \in \Delta$ (\S \ref{sec:problem_def}).
By exploiting available realizations of $\delta$, we propose a data-based affine policy to sample the space of feasible control inputs at the vertices of the given set whose controlled invariance property is to be verified. We are then able to translate the control invariance property verification of the given set with a prescribed affine policy into a \gls{LP} (\S \ref{sec:prob_cert}). The feasibility of such an \gls{LP}, along with known results in scenario theory \cite{calafiore2006scenario,campi2018general}, typically characterizing decision-making problems \cite{margellos2015connection,fabiani2020robustness,fabiani2020probabilistic,wang2020scenario}, allow us to establish an a-posteriori probabilistic bound on the controlled invariance property of a given set \gls{wrt}
any \gls{LTI} dynamics realized by unseen scenarios of $\delta$, including the nominal one (\S \ref{sec:main_res}). 
We illustrate our approach on a networked, multi-agent system with edge weights in the underlying graph not deterministically known (\S \ref{sec:motivating}, \ref{sec:num_sim}).
To the best of our knowledge, this work is the first to provide data-driven probabilistic certificates for controlled invariance verification for a class of black-box \gls{LTI} systems.

\section{Motivating example: networked multi-agent system with unknown weighted graph}\label{sec:motivating}
To motivate the control problem addressed throughout the paper, we consider a static network of $n$ entities that  exchange information locally according to a connected and undirected graph $\mc{G} \coloneqq (\mc{N}, \mc{E}, \mc{W})$ with known topology. 
The set $\mc{N} \coloneqq \{1, \ldots, n\}$ indexes the agents, which are assumed to be associated with a scalar variable $x_i \in \R$, $\mc{E} \subseteq \{(i, j) \in \mc{N}^2 \mid i \neq j\}$ denotes the information flow links, while $\mc{W} \subseteq \R^{|\mc{E}|}$ the possible weights on the edges. 
We consider an instance where the agents follow a weighted agreement protocol that is also influenced by constrained external inputs $u \in \mathscr{U} \subseteq \R^m$ injected at $m$ specific nodes. We can therefore split the set $\mc{N} = \mc{N}_F \cup \mc{N}_I$ into floating ($\mc{N}_F$, $n_F \coloneqq |\mc{N}_F|$) and input nodes ($\mc{N}_I$, $m \coloneqq |\mc{N}_I|$). 
By introducing $\col(\cdot)$ as the operator stacking its arguments in column vectors or matrices, the incidence matrix $D \in \R^{n \times |\mc{E}|}$ associated to $\mc{G}$ can be partitioned as $D = \col(D_F, D_I)$, with $D_F \in \R^{n_F \times |\mc{E}|}$ and $D_I \in \R^{m \times |\mc{E}|}$, thus leading to the following (possibly constrained) \gls{DT} \gls{LTI} dynamics characterizing the floating node states $x_F \coloneqq \col((x_i)_{i \in \mc{N}_F}) \in \mathscr{X}_F$ \cite{mesbahi2010graph}
\begin{equation}\label{eq:LTI_network}
	x_F^+ = A_F x_F + B_F u,
\end{equation}
where $A_F \coloneqq A_F(w) = I_{n_F} - D_F W D_F^\top$, $B_F \coloneqq B_F(w) = - D_F W D_I^\top$, $u \coloneqq \col((x_i)_{i \in \mc{N}_I})$ and $W \coloneqq \diag(w) \in \R^{|\mc{E}| \times |\mc{E}|}$, with $w \in \mc{W}$ a vector of weights associated with the links, whereby $\bar{w} \in \mc{W}$ characterizes their nominal values. 

Particularly when the weights are allowed to be nonpositive, the state evolutions generated by weighted consensus protocols on a network as in \eqref{eq:LTI_network} can be very rich, including steady-state trajectories that are synchronized, clustered, or even unstable \cite{zelazo2014definiteness,zelazo2015robustness}.
However, the nominal weights $\bar{w}$ on the links, especially when arising from a physical modelling, are typically hard to quantify exactly, and hence we may have available either a rough estimate of them, or some measurements \cite{zelazo2014definiteness,yucelen2015control,zelazo2015robustness,siami2017centrality}.
Therefore, within such a flexible framework, establishing whether the unknown system in \eqref{eq:LTI_network} is stabilizable by means of suitable control inputs, i.e., if the closed-loop trajectories of \eqref{eq:LTI_network} satisfy $x_F(t) \in \mc{S}_F \subseteq \mathscr{X}_F$, $t \in \N_0 \coloneqq \N \cup \{0\}$, for a given set $\mc{S}_F$, becomes instrumental. 
This essentially amounts to a control invariance problem \cite{blanchini1999set}, which for this motivating example is formalized as follows.


\begin{problem}\label{prob:main_question}
	Establish data-driven probabilistic certificates of controlled invariance for given sets \gls{wrt} black-box \gls{DT} \gls{LTI} systems as in \eqref{eq:LTI_network}, i.e., systems for which $w$ is not a-priori know, however, we have a finite number of scenarios available belonging to the set $\mc{W}$.
	\hfill$\square$
\end{problem}


\section{Problem formulation}\label{sec:problem_def}
In this paper, we consider \gls{DT} \gls{LTI} systems in the form
\begin{equation}\label{eq:linear_sys}
	x^+ = \bar{A} x + \bar{B} u,
\end{equation}
where $\bar{A} \in \R^{n \times n}$, $\bar{B} \in \R^{n \times m}$, while $x \in \mathscr{X} \subseteq \R^n$ and $u \in \mathscr{U} \subseteq \R^m$ are the constrained vectors of state variables and control inputs, respectively. Then, by defining a C-set $\mc{S} \subseteq \R^n$ as a convex, bounded set such that $0 \in \mathrm{int}(\mc{S})$, which reduces to a C-polytope if it is also polyhedral, we make the following assumption on the sets $\mathscr{X}$ and $\mathscr{U}$.
\begin{standing}
	The set $\mathscr{X}$ is a C-set, while $\mathscr{U}$ is a C-polytope.
	{\hfill$\square$}
\end{standing}

In the remainder, we use $x(t)$, $t \in \N_0$, as opposed to $x$, making the time dependence explicit, whenever necessary. 

\subsection{Stabilizability of discrete-time LTI systems}
We start by recalling the following well-known definition:

\begin{definition}\textup{(Controlled invariance)}\label{def:controlled_invariant}
	A set $\mc{S} \subseteq \mathscr{X}$ is a controlled invariant \gls{wrt} \eqref{eq:linear_sys} if there exists a $\mc{C}^1$-class feedback control law $\kappa : \R^n \to \R^m$, $\kappa(x(t)) \in \mathscr{U}$, $t \in \N_0$, such that, for any $x(0) \in \mc{S}$, the trajectory originating from \eqref{eq:linear_sys} with $u(t) = \kappa(x(t))$ satisfies $x(t) \in \mc{S}$, for all $t \in \N$.
	{\hfill$\square$}
\end{definition}

We next restate a fundamental result characterizing the controlled invariance of a C-polytope $\mc{S}$ \gls{wrt} \gls{DT} \gls{LTI} systems as in \eqref{eq:linear_sys}, which will be key in the rest of the paper.

\begin{lemma}\textup{(\hspace{-.05mm}\cite[Cor.~4.46]{blanchini2015set})}\label{lemma:control_vert}
	A C-polytope $\mc{S} \subseteq \mathscr{X}$ is controlled invariant for the \gls{DT} \gls{LTI} system in \eqref{eq:linear_sys} if and only if, for all $x_i \in \mathrm{vert}(\mc{S})$ (the set of vertices of $\mc{S}$), there exists a feasible control input $u \in \mathscr{U}$ such that $\bar{A} x_i + \bar{B} u \in \mc{S}$.
	\hfill$\square$
\end{lemma}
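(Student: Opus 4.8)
The plan is to establish the two implications separately: the \emph{only if} part is an immediate consequence of Definition~\ref{def:controlled_invariant}, while the \emph{if} part rests on the convexity of $\mc{S}$ and of $\mathscr{U}$ together with the fact that every point of a polytope is a convex combination of its vertices. For the \emph{only if} direction, assume $\mc{S}$ is controlled invariant for \eqref{eq:linear_sys}. By Definition~\ref{def:controlled_invariant} there exists a feedback $\kappa$ with $\kappa(x) \in \mathscr{U}$ and $\bar{A} x + \bar{B} \kappa(x) \in \mc{S}$ for every $x \in \mc{S}$; evaluating this at each $x_i \in \mathrm{vert}(\mc{S}) \subseteq \mc{S}$ furnishes the feasible input $u \coloneqq \kappa(x_i)$ with $\bar{A} x_i + \bar{B} u \in \mc{S}$. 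Note that here only the one-step consequence of invariance, read off at the vertices, is used.

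For the \emph{if} direction, let $\mathrm{vert}(\mc{S}) = \{x_i\}_{i=1}^{N}$, and for each $i \in \until{N}$ pick $u_i \in \mathscr{U}$ with $\bar{A} x_i + \bar{B} u_i \in \mc{S}$. Any $x \in \mc{S}$ can be written as $x = \sum_{i=1}^{N} \lambda_i x_i$ with $\lambda_i \ge 0$ and $\sum_{i=1}^{N} \lambda_i = 1$; set $u(x) \coloneqq \sum_{i=1}^{N} \lambda_i u_i$. Since $\mathscr{U}$ is a C-polytope, hence convex, $u(x) \in \mathscr{U}$, and by linearity of \eqref{eq:linear_sys} and convexity of $\mc{S}$ one gets $\bar{A} x + \bar{B} u(x) = \sum_{i=1}^{N} \lambda_i (\bar{A} x_i + \bar{B} u_i) \in \mc{S}$. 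This yields the ``one-step'' property: from any state in $\mc{S}$ an admissible input steers the system back into $\mc{S}$. Applying this property recursively along the closed-loop trajectory then gives $x(t) \in \mc{S}$ for all $t \in \N$, which is the claimed controlled invariance, \emph{provided} the required inputs are produced by an admissible feedback law.

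The main obstacle is exactly this last proviso: Definition~\ref{def:controlled_invariant} demands a $\mathcal{C}^1$ feedback $\kappa$, whereas the barycentric map $x \mapsto u(x)$ constructed above depends on the (in general nonunique) weights $\lambda_i$ and, even after fixing a simplicial subdivision of $\mc{S}$, is only piecewise affine --- continuous but not $\mathcal{C}^1$. I would resolve this by treating $x \rightrightarrows \Phi(x) \coloneqq \{u \in \mathscr{U} \mid \bar{A} x + \bar{B} u \in \mc{S}\}$ as a set-valued map: it is convex-valued, nonempty-valued by the one-step property, and lower semicontinuous since its graph is a polytope whose projection onto the $x$-coordinates equals $\mc{S}$; Michael's selection theorem then provides a continuous selection, which a standard smoothing argument --- mollification combined with a slight contraction of $\mc{S}$ toward the origin, or a $\mathcal{C}^\infty$ partition of unity subordinate to a triangulation of $\mc{S}$, exploiting that convex combinations of admissible inputs remain admissible --- upgrades to a $\mathcal{C}^1$ feedback realizing invariance for all $t \in \N$. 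If one is willing to work with merely continuous feedback laws, as is customary in the set-invariance literature, the selection step alone closes the argument and no smoothing is needed.
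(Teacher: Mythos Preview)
The paper does not supply its own proof of this lemma: it is stated as a citation of \cite[Cor.~4.46]{blanchini2015set} and left unproved. So there is nothing in the paper to compare your argument against line by line.

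Your argument is the standard one and is correct at the level of the underlying set-invariance result. The ``only if'' direction is immediate, and for the ``if'' direction your barycentric construction is exactly the piecewise vertex control law the paper itself invokes in \eqref{eq:vertex_law}; the one-step inclusion plus induction gives invariance. You are also right to flag the mismatch with Definition~\ref{def:controlled_invariant}, which asks for a $\mc{C}^1$ feedback: the vertex law is only piecewise affine. Note, however, that the paper does not resolve this either---it uses the piecewise-affine law \eqref{eq:vertex_law} throughout as the stabilizing feedback, so the $\mc{C}^1$ requirement in Definition~\ref{def:controlled_invariant} appears to be an artefact rather than a load-bearing hypothesis, and the cited source \cite{blanchini2015set} works with continuous (indeed, piecewise-linear) selections. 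Your Michael-selection-plus-smoothing sketch is a reasonable way to close the gap if one insists on $\mc{C}^1$, but be aware that the mollification step needs care: convolving a continuous selection can push values slightly outside $\mathscr{U}$ or break the inclusion $\bar{A}x+\bar{B}\kappa(x)\in\mc{S}$, and the ``slight contraction toward the origin'' you mention only buys slack in the interior of $\mc{S}$, not at the boundary where the constraint is tight. A cleaner route, if $\mc{C}^1$ is genuinely required, is to note that $\Phi(x)=\{u\in\mathscr{U}\mid \bar{A}x+\bar{B}u\in\mc{S}\}$ is a polyhedral multifunction with nonempty interior at $x=0$ (since $0\in\mathrm{int}(\mathscr{U})$ and $0\in\mathrm{int}(\mc{S})$), and to build a smooth selection directly via a partition-of-unity argument over a simplicial decomposition, averaging the affine selections on overlapping neighbourhoods. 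For the purposes of this paper, though, the piecewise-affine selection suffices and your proof matches what the cited reference actually establishes.
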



Verifying the controlled invariance of $\mc{S}$ therefore amounts to checking the one-step controllability at each vertex of the set. 
A commonly used feedback control law guaranteeing the stabilizability of \gls{DT} \gls{LTI} systems inside $\mc{S}$ is the piecewise vertex control law \cite{gutman1986admissible,nguyen2013implicit}. Specifically, since any state $x \in \mc{S}$ can be decomposed as $x = \sum_{i \in \mc{V}}^{}  \gamma_i x_i$ for the $N$ vertices $\{x_i\}_{i \in \mc{V}}$ of $\mc{S}$, $\mc{V} \coloneqq \{1, \ldots, N\}$, with $\bsone_N^\top \gamma \leq 1$, $\gamma_i \geq 0$, $i \in \mc{V}$, such a control law amounts to \cite[Th.~2]{nguyen2013implicit}
\begin{equation}\label{eq:vertex_law}
	u(t) = \sum_{i \in \mc{V}} \gamma^\star_i(t) u_i,
\end{equation}
where $\gamma^\star(t) \in \mathrm{argmin}_{\gamma \in [0,1]^N} \, \{\bsone_N^\top \gamma \mid \sum_{i \in \mc{V}}  \gamma_i x_i = x(t) \}$ depends on the current state $x(t)$, while $\{u_i\}_{i \in \mc{V}}$ are arbitrary admissible control values at the vertices of $\mc{S}$,  $\{x_i\}_{i \in \mc{V}}$.


In case the pair $(\bar{A}, \bar{B})$, as well as the set $\mathscr{U}$, is available, the condition in Lemma~\ref{lemma:control_vert} can be easily verified through an \gls{LP}, however, this poses several challenges if uncertainty is present on the problem data. 
In this paper, we assume the nominal pair $(\bar{A}, \bar{B})$ characterizing the behaviour of \eqref{eq:linear_sys} to be unknown, though belonging to a (possibly infinite) family of matrices parametrized by a vector $\delta \in \Delta \subseteq \R^\ell$, i.e., 
\begin{equation}\label{eq:pair_inclusion}
	(\bar{A}, \bar{B}) \in \{(A(\delta), B(\delta))_{\delta \in \Delta}\},
\end{equation}
where $A : \R^\ell \to \R^{n \times n}$ and $B : \R^\ell \to \R^{n \times m}$. Specifically, with \eqref{eq:pair_inclusion} we mean that the system in \eqref{eq:linear_sys} evolves according to a predefined \gls{DT} \gls{LTI} dynamics, which however may differ from the (unknown) nominal one due to a possibly inexact quantification of some parameters, encoded by $\delta \in \Delta$. Thus, we refer to \eqref{eq:linear_sys} as a \emph{black-box} \gls{DT} \gls{LTI} system since its nominal dynamics, which can be associated with a specific realization of $\delta$, say $\bar{\delta} \in \Delta$, it is not revealed until runtime.

\subsection{Stabilizability of \gls{LTI} systems with unknown parameters}
Since we assume that the system in \eqref{eq:linear_sys} is a black-box, we can not directly apply the control law in \eqref{eq:vertex_law} to stabilize it, despite the fact that the admissible control values at the vertices, $\bs{u} \coloneqq \col((u_i)_{i \in \mc{V}}) \in \R^{mN}$, are arbitrary in $\mathscr{U}^N$. 
Let some C-polytope $\mc{S} \subseteq \mathscr{X}$ be given. 
According to Problem~\ref{prob:main_question}, we aim at providing out-of-sample certificates on the controlled invariance of $\mc{S}$ \gls{wrt} the black-box system in \eqref{eq:linear_sys} by exploiting some observed realizations, i.e., scenarios, of the parameter $\delta$ characterizing the inclusion in \eqref{eq:pair_inclusion}. 


Formally, we assume the parameter $\delta$ to live in some probability space $(\Delta, \mc{D}, \prob)$, where $\Delta \subseteq \R^\ell$ is the support set of $\delta$, $\mc{D}$ is the associated $\sigma$-algebra and $\prob$ is a (possibly unknown) probability measure over $\mc{D}$.
We consider $\delta_K \coloneqq \{\delta^{(j)}\}_{j \in \mc{K}} = \{\delta^{(1)}, \ldots, \delta^{(K)}\} \in \Delta^K$, $\mc{K} \coloneqq \{1, \ldots, K\}$, as a finite collection of $K \in \N$ \gls{iid} realizations of $\delta$ (also called a $K$-multisample\footnote{Every $\delta_K$ is defined over the probability space $(\Delta^K, \mc{D}^K, \prob^K)$, resulting from the $K$-fold Cartesian product of the probability space $(\Delta, \mc{D}, \prob)$.}). 
We note that to any realization $\delta \in \Delta$ is associated a pair of matrices $(A(\delta), B(\delta))$. Thus, we define the set of admissible control values at the vertices $\{x_i\}_{i \in \mc{V}}$ of $\mc{S}$ allowed by such a realization $\delta \in \Delta$ as
\begin{equation}\label{eq:adm_set}
	\mc{U}_\delta \coloneqq \{ \bs{u} \in \mathscr{U}^N \mid A(\delta) x_i + B(\delta) u_i \in \mc{S}, \, \forall i \in \mc{V} \}.
\end{equation}
According to Lemma~\ref{lemma:control_vert}, as long as $\mc{U}_\delta \neq \emptyset$, the set $\mc{S}$ is a controlled invariant set for $x^+ = A(\delta) x + B(\delta) u$, which is stabilizable by means of the piecewise vertex control law \eqref{eq:vertex_law} with admissible control values contained in $\mc{U}_{\delta}$. Thus, aiming to establish controlled invariance certificates to previously unseen realizations of $\delta$, we introduce the definition of \emph{violation probability} for a generic vector of input values $\bs{u}$.

\begin{definition}\textup{(Violation probability)}\label{def:viol_prob}
	The violation probability associated with the input values $\bs{u} \in \mathscr{U}^N$ is given~by
	\begin{equation}\label{eq:viol_prob}
		V(\bs{u}) \coloneqq \mathbb{P}\{\delta \in \Delta \mid \bs{u} \notin \mc{U}_\delta\}.
	\end{equation}
	\hfill$\square$
\end{definition}

According to Lemma~\ref{lemma:control_vert}, $V : \mathscr{U}^N \to [0,1]$ measures the violation of the controlled invariance of the set $\mc{S}$ associated with input values $\bs{u}$ \gls{wrt} an unseen pair $(A(\delta), B(\delta))$. 
In other words, $V(\bs{u})$ measures the realizations $\delta \in \Delta$ such that, when these are drawn, $\bs{u}$ can not guarantee the controlled invariance of $\mc{S}$ \gls{wrt} the system induced by $(A(\delta), B(\delta))$.


\section{Dealing with the uncertainty}\label{sec:prob_cert}
Note that, given any $K$-multisample $\delta_K \in \Delta^K$, solving an \gls{LP} allows us to compute some vector of input values $\bs{u}^{\star}_{K} \in \mc{U}_{\delta_K} \coloneqq \cap_{j \in \mc{K}} \, \mc{U}_{\delta^{(j)}}$ such that $\bs{u}^{\star}_{K} \in \mathscr{U}^N$, and
\begin{equation}\label{eq:implication_same_u}
	\forall j \in \mc{K} : \, A(\delta^{(j)}) x_i + B(\delta^{(j)}) u^{\star}_{i, K} \in \mc{S}, \, \forall i \in \mc{V}.
\end{equation}

We therefore wish to establish an a-posteriori bound on the violation probability $V(\bs{u}^{\star}_{K})$, to claim with high confidence that the probability $\bs{u}^{\star}_{K}$ guarantees the controlled invariance of $\mc{S}$ \gls{wrt} the family $\{(A(\delta^{(j)}), B(\delta^{(j)}))_{j \in \mc{K}}\} \cup \{(A(\delta), B(\delta))\}$ is above a certain value. By Lemma~\ref{lemma:control_vert}, this is equivalent to concluding that $\mc{S}$ is a controlled invariant for the black-box system in \eqref{eq:linear_sys}, with the same high confidence.

\subsection{General control policies}\label{subsec:sampling_pol}
Note the conservatism inherent in \eqref{eq:implication_same_u}. For any vertex $i \in \mc{V}$, one would consider exactly the same admissible input value, $u^\star_{i,K}$, for all the observed $K$ samples.
To alleviate this conservativism, we introduce a policy for each vertex, namely some (possibly multi-valued) functional $\pi_i : \Delta \to \R^{m}$, which maps any realization $\delta \in \Delta$ to some input value in $\R^{m}$. In fact, according to Lemma~\ref{lemma:control_vert}, on each vertex it suffices to find an admissible
control value for every observed scenario $\delta^{(j)}$, i.e., for every pair of matrices $(A(\delta^{(j)}), B(\delta^{(j)}))$, $j \in \mc{K}$.
Given a generic sample $\delta \in \Delta$, let $\Pi_\delta \coloneqq \{ \bs{\pi} : \Delta \to \R^{mN} \mid \pi_i(\delta) \in \mc{U}, A(\delta) x_i + B(\delta) \pi_i(\delta) \in \mc{S}, \text{ for all } i \in \mc{V}\}$ be the set of mappings $\bs{\pi}(\cdot) \coloneqq \col((\pi_i(\cdot))_{i \in \mc{V}})$ returning admissible inputs at the vertices of $\mc{S}$ for the pair $(A(\delta), B(\delta))$.
Note in addition that $\Pi_\delta \neq \emptyset$ guarantees that $\mc{S}$ is controlled invariant for the \gls{DT} \gls{LTI} system described by the specific matrices $(A(\delta), B(\delta))$ associated with the scenario $\delta$. 
However, looking for an element in $\Pi_\delta$ amounts to an infinite dimensional problem, as such a set contains all possible mappings $\bs{\pi}(\cdot)$. 


\subsection{Affine control policies}
To make the problem computationally tractable, we focus on a family of mappings with finite parametrization, i.e., the affine ones. 
Thus, for each vertex $i \in \mc{V}$, we define $\pi_i(\delta) \coloneqq C_i \delta + d_i$, with $C_i \in \R^{m \times \ell}$ and $d_i \in \R^{m}$, which leads to $\bs{\pi}(\delta) \coloneqq \bs{C} \delta + \bs{d}$, where $\bs{C} \coloneqq \col((C_i)_{i \in \mc{V}})$ and $\bs{d} \coloneqq \col((d_i)_{i \in \mc{V}})$ belong to $\mc{M} \coloneqq \{(\bs{C}, \bs{d}) \mid C_i \in \R^{m \times \ell}, d_i \in \R^m\}$. 
Then, the set of admissible affine policies for a given $\delta \in \Delta$ is $\mc{L}_\delta \coloneqq \{ (\bs{C}, \,\bs{d}) \in \mc{M} \mid C_i \delta + d_i \in \mathscr{U}, A(\delta) x_i + B(\delta) (C_i \delta + d_i) \in \mc{S}, \, \forall i \in \mc{V}\} \subset \Pi_\delta$.
The fact that $\mc{L}_\delta \neq \emptyset$ ensures that $\mc{S}$ is a controlled invariant for the system induced by $(A(\delta), B(\delta))$.
Given $K$ observations $\delta_K \in \Delta^K$, an optimal affine policy $(\bs{C}^\star_{K}, \bs{d}^\star_{K}) \in \mc{L}_{\delta_K} \coloneqq \cap_{j \in \mc{K}} \, \mc{L}_{\delta^{(j)}}$ satisfies, for all $j \in \mc{K}, \bs{C}^\star_{K} \delta^{(j)} + \bs{d}^\star_{K} \in \mathscr{U}^N$, and
\begin{equation}\label{eq:implication_linear_policy}
	A(\delta^{(j)}) x_i + B(\delta^{(j)}) (C^\star_{i, K} \delta^{(j)} + d^\star_{i, K}) \in \mc{S}, \forall i \in \mc{V}.
\end{equation}

For any vertex $i \in \mc{V}$ we now obtain a different admissible input value depending on the sample $\delta^{(j)}$ at hand, in contrast with the conservative approach in \eqref{eq:implication_same_u}. 
Moreover, unlike the infinite dimensional problem introduced in \S \ref{subsec:sampling_pol}, computing a pair $(\bs{C}^\star_{K}, \bs{d}^\star_{K})$ amounts to finding a feasible solution to an \gls{LP}. The C-polytopes $\mc{S}$ and $\mathscr{U}$ are $\mc{S} \coloneqq \{x \in \R^n \mid F x \leq \bsone_p \}$ and $\mathscr{U} \coloneqq \{ u \in \R^m \mid H u \leq \bsone_q \}$, where $F \in \R^{p \times n}$ and $H \in \R^{q \times m}$ have full column rank \cite[\S 3.3]{blanchini2015set}. Manipulating the inclusions in \eqref{eq:implication_linear_policy} with $\bs{x} \coloneqq \col((x_i)_{i \in \mc{V}})$ leads directly to
\begin{equation}\label{eq:linear_prog_affine}
	\mc{L}_{\delta_K} =  \underset{j \in \mc{K}}{\cap} \, \underset{\bs{C}, \bs{d}}{\mathrm{argmin}} \, \{ 0 \mid 
	G(\delta^{(j)}) (\bs{C} \delta^{(j)} + \bs{d}) \leq l(\delta^{(j)})
	\},
\end{equation}
where $G(\delta^{(j)}) \coloneqq \col(H\otimes I_N, F B(\delta^{(j)})\otimes I_N)$ and $l(\delta^{(j)}) \coloneqq \col(\bsone_{qN}, \bsone_{pN} - (F A(\delta^{(j)}) \otimes I_N) \bs{x})$. It follows that, for every $j \in \mc{K}$, \eqref{eq:linear_prog_affine} amounts to a feasibility problem defined over $(q+p)N$ linear constraints characterized by matrices $F$, $H$ and the pair matrices $\{(A(\delta^{(j)}), B(\delta^{(j)}))_{j \in \mc{K}}\}$. Via standard manipulations, $\mc{L}_{\delta_K}$ can be compactly rewritten as in \eqref{eq:LP_complete}.

\begin{figure*}[!h]
	\begin{equation}\label{eq:LP_complete}
		\mc{L}_{\delta_K} = \underset{\bs{C}, \bs{d}}{\mathrm{argmin}} \left\{ 0 \; \middle\rvert \;
		\diag((G(\delta^{(j)}))_{j \in \mc{K}}) (	(\bs{C} \otimes I_K) \col((\delta^{(j)})_{j \in \mc{K}}) + \bs{d} \otimes \bsone_K  ) \leq \col((l(\delta^{(j)}))_{j \in \mc{K}})
		\right\}.
	\end{equation}
	\hrulefill
	\vspace*{-12pt}
\end{figure*}

\section{A-posteriori probabilistic certificates of controlled invariance}\label{sec:main_res}

\subsection{Main result}
In case $\mc{L}_{\delta_K} \neq \emptyset$, an optimal pair $(\bs{C}^\star_{K}, \bs{d}^\star_{K})$ may not be unique since \eqref{eq:LP_complete} is a feasibility problem. We henceforward assume that a tie-break rule guaranteeing the uniqueness of the solution to \eqref{eq:LP_complete} is in place. 
This allows us to introduce a single-valued mapping $\Theta_K : \Delta^K \to \mc{M}$ that, given any $\delta_K \in \Delta^K$, satisfies
$
\Theta_K(\delta_K) \coloneqq (\bs{C}^\star_{K}, \bs{d}^\star_{K}).
$
We next recall the key definition of \emph{support subsample} to establish our probabilistic certificate of controlled invariance for a given C-polytope $\mc{S}$. 

\begin{definition}\textup{(Support subsample, \cite[Def.~2]{campi2018general})}\label{def:support_sub}
	Given any $\delta_K \in \Delta^K$, a support subsample $S \subseteq \delta_K$ is a $p$-tuple of unique elements of $\delta_K$, $S \coloneqq \{\delta^{(i_1)}, \ldots, \delta^{(i_p)}\}$, with $i_1 < \ldots < i_p$, that gives the same solution as the original $K$-multisample, i.e.,
	$
	\Theta_p(\delta^{(i_1)}, \ldots, \delta^{(i_p)}) = \Theta_K(\delta^{(1)}, \ldots, \delta^{(K)}).
	$
	\hfill$\square$
\end{definition}

Then, let $\Upsilon_K : \Delta^K \rightrightarrows \mc{K}$ be any algorithm returning a $p$-tuple ${i_1, \ldots, i_p}$, $i_1 < \ldots < i_p$, such that $\{\delta^{(i_1)}, \ldots, \delta^{(i_p)}\}$ is a support subsample for $\delta_K$, and let $s_K \coloneqq |\Upsilon_K(\delta_K)|$. In this case, a support subsample for $\delta_K$ can be identified as the subset of samples that generates a minimal representation for the polyhedral feasible set of \eqref{eq:LP_complete}. The following result characterizes the violation probability of the optimal pair $(\bs{C}^\star_{K}, \bs{d}^\star_{K})$, and therefore establishes a probabilistic certificate for the controlled invariance property of the set $\mc{S}$ \gls{wrt} the black-box linear system in \eqref{eq:linear_sys}, thus addressing Problem~\ref{prob:main_question}.

\begin{theorem}\label{th:prob_cert}
	Fix $\beta \in (0,1)$ and, for any $K \in \N$, let $\varepsilon : \mc{K} \cup \{0\} \to [0, 1]$ be a function such that $\varepsilon(K) = 1$ and $\sum_{h = 0}^{K - 1} \left( \begin{smallmatrix}
			K\\
			h
	\end{smallmatrix} \right) (1 - \varepsilon(h))^{K - h} = \beta$.
	Given any C-polytope $\mc{S} \subseteq \mathscr{X}$, $K$-multisample $\delta_K \in \Delta^K$ with associated matrices $\{(A(\delta^{(j)}), B(\delta^{(j)}))_{j \in \mc{K}}\}$, assume that $\mc{L}_{\delta_K}$ in \eqref{eq:LP_complete} is nonempty. Then, for any $\Theta_K$, $\Upsilon_K$ and $\mathbb{P}$, it holds that
	\begin{equation}\label{eq:prob_feas_boud}
		\mathbb{P}^K \{\delta_K \in \Delta^K \mid V(\bs{C}^\star_{K} \delta + \bs{d}^\star_{K}) > \varepsilon(s_K) \} \leq \beta,
	\end{equation}
	namely, the probability that $\mc{S}$ is a controlled invariant set \gls{wrt} the black-box system in \eqref{eq:linear_sys} is at least $1 - \varepsilon(s_K)$ with confidence greater than or equal to $1-\beta$.
	\hfill$\square$
\end{theorem}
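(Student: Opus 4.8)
The plan is to recognize Theorem~\ref{th:prob_cert} as an instance of the so-called \emph{wait-and-judge} scenario theory of \cite{campi2018general}, applied to the particular LP \eqref{eq:LP_complete}, and to carry out the verification that all structural hypotheses of that general result are met in our setting. First I would set up the abstract framework: let the decision variable be the pair $(\bs{C}, \bs{d}) \in \mc{M} \cong \R^{mN(\ell+1)}$, let each scenario $\delta \in \Delta$ impose the constraint $(\bs{C}, \bs{d}) \in \mc{L}_\delta$, and note that $\mc{L}_\delta$ is a polyhedron in $(\bs{C}, \bs{d})$ because the map $(\bs{C}, \bs{d}) \mapsto \bs{C}\delta + \bs{d}$ is linear and the feasibility conditions $C_i\delta + d_i \in \mathscr{U}$ and $A(\delta)x_i + B(\delta)(C_i\delta + d_i) \in \mc{S}$ are linear inequalities once $\mathscr{U}$ and $\mc{S}$ are written via $H$ and $F$; this is exactly the content of \eqref{eq:linear_prog_affine}--\eqref{eq:LP_complete}. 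The feasibility program $\min\{0 \mid (\bs{C}, \bs{d}) \in \mc{L}_{\delta_K}\}$ with a tie-break rule enforcing uniqueness defines the single-valued solution map $\Theta_K$, exactly as assumed in the statement.

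Next I would check the two hypotheses required by \cite[Thm.~1]{campi2018general} (the wait-and-judge theorem). The first is \emph{consistency/feasibility}: for the realized $\delta_K$ we are assuming $\mc{L}_{\delta_K} \neq \emptyset$, so a solution exists; uniqueness is guaranteed by the tie-break rule; and the problem is \emph{non-degenerate} in the weak sense that the solution obtained from a support subsample $S$ coincides with that obtained from the full sample, which is precisely Definition~\ref{def:support_sub}, witnessed by the algorithm $\Upsilon_K$. The second is that the cardinality $s_K = |\Upsilon_K(\delta_K)|$ of the returned support subsample is bounded; here, since \eqref{eq:LP_complete} is an LP over $d \coloneqq mN(\ell+1)$ variables, a minimal set of active constraints pinning down the (unique) optimizer has at most $d$ elements, so $s_K \le d < \infty$, and in particular $\varepsilon$ need only be defined on $\{0,1,\dots,K\}$. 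With these ingredients in place, \cite[Thm.~1]{campi2018general} gives directly that for the $\varepsilon(\cdot)$ defined implicitly by $\sum_{h=0}^{K-1}\binom{K}{h}(1-\varepsilon(h))^{K-h} = \beta$ (with $\varepsilon(K)=1$), one has $\prob^K\{\delta_K \mid V^*(\Theta_K(\delta_K)) > \varepsilon(s_K)\} \le \beta$, where $V^*$ is the probability that a fresh $\delta$ renders the computed decision infeasible for the scenario program.

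The final step is to translate this abstract violation statement into the controlled-invariance language. By construction, the decision $\Theta_K(\delta_K) = (\bs{C}^\star_K, \bs{d}^\star_K)$ induces the affine policy $\bs{\pi}(\delta) = \bs{C}^\star_K \delta + \bs{d}^\star_K$, and the event ``$(\bs{C}^\star_K, \bs{d}^\star_K) \notin \mc{L}_\delta$'' is exactly the event ``$\bs{C}^\star_K\delta + \bs{d}^\star_K \notin \mc{U}_\delta$'' together with the input-admissibility failure — but since $\mc{L}_\delta$ already encodes both $C_i\delta+d_i \in \mathscr{U}$ and $A(\delta)x_i + B(\delta)(C_i\delta+d_i)\in\mc{S}$, this coincides with $V(\bs{C}^\star_K\delta+\bs{d}^\star_K)$ as in Definition~\ref{def:viol_prob}. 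Hence $V^*(\Theta_K(\delta_K)) = V(\bs{C}^\star_K\delta+\bs{d}^\star_K)$ and \eqref{eq:prob_feas_boud} follows; the concluding sentence about controlled invariance is then immediate from Lemma~\ref{lemma:control_vert}, since for any $\delta$ with $\bs{C}^\star_K\delta + \bs{d}^\star_K \in \mc{U}_\delta$ the piecewise vertex control law \eqref{eq:vertex_law} with vertex values $\bs\pi(\delta)$ keeps trajectories of $x^+ = A(\delta)x + B(\delta)u$ inside $\mc{S}$, and this applies in particular to the nominal $\bar\delta$.

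I expect the main obstacle to be not any deep argument but rather the careful bookkeeping needed to invoke \cite[Thm.~1]{campi2018general} verbatim: one must confirm that our feasibility LP with a tie-break rule fits the "$\Theta_K$ single-valued, $\Upsilon_K$ a valid support-subsample selector" template of that paper, that the non-degeneracy assumption built into their definition of support subsample is automatically satisfied (it is, because we \emph{define} $\Upsilon_K$ to return a subsample that reproduces the full-sample solution), and that the implicitly-defined $\varepsilon$ is well-posed — i.e., that $h \mapsto \sum_{h=0}^{K-1}\binom{K}{h}(1-\varepsilon(h))^{K-h}$ can be made equal to any target $\beta \in (0,1)$ by a suitable nondecreasing choice of $\varepsilon$. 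All of these are handled in \cite{campi2018general}, so the proof is essentially a pointer to that theorem plus the identification $V^* = V$ described above.
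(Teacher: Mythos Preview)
Your proposal is correct and follows essentially the same route as the paper: verify that the feasibility LP \eqref{eq:LP_complete} with its tie-break rule satisfies the consistency and uniqueness hypotheses of \cite[Th.~1]{campi2018general}, invoke that theorem to obtain \eqref{eq:prob_feas_boud}, and then translate the abstract violation bound into a controlled-invariance statement via Lemma~\ref{lemma:control_vert}. The paper's proof is slightly terser about the dimension bound on $s_K$ and the well-posedness of $\varepsilon(\cdot)$, but the substance is identical.
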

\begin{proof}
	Given any polyhedral C-set $\mc{S}$ and $K$-multisample $\delta_K \in \Delta^K$ with associated pairs of matrices $\{(A(\delta^{(j)}), B(\delta^{(j)}))_{j \in \mc{K}}\}$, assuming that $\mc{L}_{\delta_K} \neq \emptyset$ implies that an optimal pair $(\bs{C}^\star_{K}, \bs{d}^\star_{K})$ solving \eqref{eq:LP_complete} exists and, assuming some tie-break rule, it is also unique. Therefore, we have $(\bs{C}^\star_{K}, \bs{d}^\star_{K}) \in \mc{L}_{\delta_K}$, which clearly entails the inclusion $(\bs{C}^\star_{K}, \bs{d}^\star_{K}) \in \mc{L}_{\delta^{(j)}}$, for all $j \in \mc{K}$. By construction, this means that, for every $\delta^{(j)} \in \delta_K$, $\bs{C}^\star_{K} \delta^{(j)} + \bs{d}^\star_{K} \in \mc{U}_{\delta_K}$ (see \eqref{eq:adm_set}), and hence that $\bs{C}^\star_{K} \delta^{(j)} + \bs{d}^\star_{K} \in \mc{U}_{\delta^{(j)}}$, for all $j \in \mc{K}$. These inclusions correspond to the so-called \emph{consistency condition} stated in \cite[Ass.~1]{campi2018general} and, together with the uniqueness of the solution, we can rely on \cite[Th.~1]{campi2018general} to obtain the probabilistic bound in \eqref{eq:prob_feas_boud}, i.e., $\mathbb{P}^K \{\delta_K \in \Delta^K \mid V(\bs{C}^\star_{K} \delta + \bs{d}^\star_{K}) > \varepsilon(s_K) \} \leq \beta.$
	In view of Lemma~\ref{lemma:control_vert}, we recall that $(\bs{C}^\star_{K}, \bs{d}^\star_{K}) \in \mc{L}_{\delta_K}$ is a necessary and sufficient condition for the affine sampling policy to return feasible input values guaranteeing the controlled invariance property of $\mc{S}$ \gls{wrt} the observed collection of \gls{DT} \gls{LTI} systems originated by the pairs $\{(A(\delta^{(j)}), B(\delta^{(j)}))_{j \in \mc{K}}\} \subseteq \{(A(\delta), B(\delta))_{\delta \in \Delta}\}$, since $A(\delta^{(j)}) x_i + B(\delta^{(j)})(\bs{C}^\star_{K} \delta^{(j)} + \bs{d}^\star_{K}) \in \mc{S}$, for all $i \in \mc{V}$, $j \in \mc{K}$. 
	Thus, the bound in \eqref{eq:prob_feas_boud} certifies that, with confidence at least $1-\beta$, $V(\bs{C}^\star_{K} \delta + \bs{d}^\star_{K}) = \prob\{\delta \in \Delta \mid \bs{C}^\star_{K} \delta + \bs{d}^\star_{K} \notin \mc{U}_\delta \} \leq \varepsilon(s_K)$, and therefore it turns out that $\prob\{\delta \in \Delta \mid \bs{C}^\star_{K} \delta + \bs{d}^\star_{K} \in \mc{U}_\delta \} \geq 1 - \varepsilon(s_K)$ with the same (arbitrarily high) confidence. Again, in view of Lemma~\ref{lemma:control_vert}, this means that the affine policy computed in \eqref{eq:LP_complete} returns feasible input values at the vertices of $\mc{S}$ that guarantee the controlled invariance property of $\mc{S}$ \gls{wrt} the \gls{DT} \gls{LTI} system originated by the pair of matrices $(A(\delta), B(\delta))$ associated to any possible unseen scenario $\delta \in \Delta$,
	and hence concludes the proof.
\end{proof}
\begin{remark}\label{remark:generality}
	Note that Theorem~\ref{th:prob_cert} certifies the controlled invariance property of some C-polytope $\mc{S}$ not only \gls{wrt} the nominal dynamics of \eqref{eq:linear_sys} generated by $(\bar{A}, \bar{B})$, but \gls{wrt} any \gls{DT} \gls{LTI} system originated by the pair of matrices $(A(\delta), B(\delta))$ associated to an unseen scenario of $\delta$.
	\hfill$\square$
\end{remark}

The following result, which follows directly from Theorem~\ref{th:prob_cert}, allows us to characterize in terms of probabilistic stabilizability guarantees the vertex control law in \eqref{eq:vertex_law}.

\begin{corollary}\label{cor:prob_cert_law}
	Under the same conditions of Theorem~\ref{th:prob_cert}, the probability that the vertex control law in \eqref{eq:vertex_law}, with input at vertices $\{C^\star_{i,K} \bar{\delta} + d^\star_{i,K}\}_{i \in \mc{V}}$, makes the given C-polytope $\mc{S}$ controlled invariant \gls{wrt} the \gls{DT} \gls{LTI} system in \eqref{eq:linear_sys} is at least $1 - \varepsilon(s_K)$ with confidence greater than or equal to $1-\beta$.
	\hfill$\square$
\end{corollary}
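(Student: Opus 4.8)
The plan is to obtain Corollary~\ref{cor:prob_cert_law} as an almost immediate consequence of Theorem~\ref{th:prob_cert} combined with Lemma~\ref{lemma:control_vert}, the only genuinely new ingredient being the observation --- already anticipated in Remark~\ref{remark:generality} --- that the nominal realization $\bar{\delta}$ may be treated as a generic unseen scenario drawn according to $\prob$.

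First I would invoke Theorem~\ref{th:prob_cert}: since $\mc{L}_{\delta_K} \neq \emptyset$, the (tie-break-unique) optimal pair $(\bs{C}^\star_K, \bs{d}^\star_K) = \Theta_K(\delta_K)$ exists, and with $\mathbb{P}^K$-probability at least $1 - \beta$ over the draw of $\delta_K$ it satisfies $V(\bs{C}^\star_K \delta + \bs{d}^\star_K) \leq \varepsilon(s_K)$, that is, $\prob\{\delta \in \Delta \mid \bs{C}^\star_K \delta + \bs{d}^\star_K \in \mc{U}_\delta\} \geq 1 - \varepsilon(s_K)$. Next I would specialize this to $\bar{\delta}$: because $(\bar{A}, \bar{B}) = (A(\bar{\delta}), B(\bar{\delta}))$ for some $\bar{\delta} \in \Delta$ whose law is $\prob$, the event $\{\bs{C}^\star_K \bar{\delta} + \bs{d}^\star_K \in \mc{U}_{\bar{\delta}}\}$ has probability at least $1 - \varepsilon(s_K)$, again with confidence at least $1 - \beta$.

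Then I would unpack the membership $\bs{C}^\star_K \bar{\delta} + \bs{d}^\star_K \in \mc{U}_{\bar{\delta}}$ via the definition in \eqref{eq:adm_set}: it is equivalent to $u_i \coloneqq C^\star_{i,K} \bar{\delta} + d^\star_{i,K} \in \mathscr{U}$ and $\bar{A} x_i + \bar{B} u_i \in \mc{S}$ for every $i \in \mc{V}$, i.e., to the existence of admissible one-step control values at all vertices of $\mc{S}$. By Lemma~\ref{lemma:control_vert} this is exactly the condition for $\mc{S}$ to be controlled invariant w.r.t. \eqref{eq:linear_sys}; moreover, by \cite[Th.~2]{nguyen2013implicit} (see also \cite{gutman1986admissible}) the piecewise vertex control law \eqref{eq:vertex_law} built from precisely the vertex values $\{C^\star_{i,K} \bar{\delta} + d^\star_{i,K}\}_{i \in \mc{V}}$ renders $\mc{S}$ invariant for \eqref{eq:linear_sys}. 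Mirroring the statement of Theorem~\ref{th:prob_cert}, chaining the inner bound $1 - \varepsilon(s_K)$ with the outer confidence $1 - \beta$ then yields the claim.

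The step I expect to require the most care is the passage from the violation-probability bound of Theorem~\ref{th:prob_cert}, which quantifies the $\prob$-measure of the set of ``bad'' parameters $\delta$, to the assertion about the \emph{specific} nominal system: one must argue, under the standing modelling assumption that $\bar{\delta}$ is itself a realization of the same random element $\delta$, that ``being a bad scenario'' for $(\bs{C}^\star_K, \bs{d}^\star_K)$ is an event of probability at most $\varepsilon(s_K)$ --- after which everything reduces to substitution into \eqref{eq:adm_set} and an application of Lemma~\ref{lemma:control_vert}. A minor additional check is that the parametric-LP selection underlying \eqref{eq:vertex_law} furnishes a feedback law of the regularity demanded by Definition~\ref{def:controlled_invariant}, but this is already subsumed by the cited invariance results.
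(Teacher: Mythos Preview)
Your proposal is correct and follows essentially the same route as the paper's own proof: invoke Theorem~\ref{th:prob_cert}, specialize the unseen scenario to $\bar{\delta}$, and conclude via the vertex control law. Your version is more explicit than the paper's (you unpack $\mc{U}_{\bar\delta}$ via \eqref{eq:adm_set}, invoke Lemma~\ref{lemma:control_vert}, and flag the modelling step that $\bar\delta$ is itself drawn from $\prob$), whereas the paper condenses all of this into two sentences; but the logical skeleton is identical.
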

\begin{proof}
	From Theorem~\ref{th:prob_cert}, if \eqref{eq:LP_complete} is feasible, then for any unobserved sample $\delta \in \Delta$, the probability that the affine policy $\bs{\pi}(\delta) = \bs{C}^\star_{K} \delta + \bs{d}^\star_{K}$ returns admissible control values at the vertices of $\mc{S}$ is at least $1 - \varepsilon(s_K)$ with confidence $1-\beta$. Therefore, with the same confidence,
	$
		u(t) = \sum_{i \in \mc{V}} \gamma_i^\star(t) (C^\star_{i,K} \bar{\delta} + d^\star_{i,K}),
	$
	with $\gamma^\star(t) \in \mathrm{argmin}_{\gamma \in [0,1]^N} \, \{\bsone_N^\top \gamma \mid \sum_{i \in \mc{V}}  \gamma_i x_i = x(t) \}$, stabilizes the system \eqref{eq:linear_sys} with at least the same probability $1 - \varepsilon(s_K)$.
\end{proof}

Following Remark~\ref{remark:generality}, the vertex control law enjoys the stabilizability guarantees in Corollary~\ref{cor:prob_cert_law} for any unobserved sample $\delta \in \Delta$, i.e., with confidence at least $1-\beta$ and input at vertices $\{C^\star_{i,K} \delta + d^\star_{i,K}\}_{i \in \mc{V}}$, the control in \eqref{eq:vertex_law} stabilizes $x^+ = A(\delta) x + B(\delta) u$ with probability at least $1 - \varepsilon(s_K)$.

\begin{remark}
	At computation time, i.e., when solving \eqref{eq:LP_complete}, the unobserved sample $\delta \in \Delta$ is not known, while at runtime of the control law in \eqref{eq:vertex_law} it is available and time-invariant.
	\hfill$\square$
\end{remark}

\subsection{On the nonemptiness of $\mc{L}_{\delta_K}$}

The probabilistic certificate in Theorem~\ref{th:prob_cert} and, specifically, the bound in \eqref{eq:prob_feas_boud}, strongly depends on the feasibility of each \gls{LP} in \eqref{eq:linear_prog_affine}, namely $\mc{L}_{\delta_K} \neq \emptyset$.
However, in case the data matrices at hand can not guarantee the nonemptiness of $\mc{L}_{\delta_K}$, we can not conclude on the controlled invariance property of $\mc{S}$ \gls{wrt} the black-box system in \eqref{eq:linear_sys}. In fact, the \gls{LP} in \eqref{eq:LP_complete} builds upon a specific choice for the sampling policy, i.e., the affine one $\bs{\pi}(\delta) = \bs{C} \delta + \bs{d}$, and this allows us to explore only a portion of the space of feasible control values $\mathscr{U}^{NK}$. 

We characterize next the feasibility of the \gls{LP} in \eqref{eq:linear_prog_affine}, obtained with $K = 1$, in terms of problem data. To simplify notation, in the statement and related proof, we omit the dependency on $\delta$ in $G$ and $l$. In what follows, we denote with $(P)_i$ (resp., $y_i$) the $i$-th row (element) of a generic matrix (vector) $P\in\R^{n\times m}$ ($y \in \R^n$). Given a set of indices $\mc{I} \subseteq \{1, \ldots, n\}$, we indicate with $P_\mc{I}$ (resp., $y_\mc{I}$) a submatrix (subvector) obtained by selecting the rows (elements) in $\mc{I}$. 

\begin{lemma}\label{lemma:singleLP_feasibility}
	Let $n \geq m$, $K = 1$ and $\delta \in \Delta$ be any given sample with associated pair of matrices $(A(\delta), B(\delta))$. Given any C-polytope $\mc{S} \subseteq \mathscr{X}$, the set $\mc{L}_\delta$ in \eqref{eq:linear_prog_affine} is nonempty if and only if, for all $i \in \mc{V}$, there exists an invertible submatrix $G_{\mc{Q} \cup \mc{P}} \in \R^{m \times m}$ of $G$, with row indices $\mc{Q} \subset \{1,\ldots,q\}$, $\mc{P} \subset \{1,\ldots,p\}$, and related subvector $l_{\mc{Q} \cup \mc{P}}$ of $l$, such that
	\begin{equation}\label{eq:iff_conditions}
		\left\{
		\begin{aligned}
			& (H)_k G_{\mc{Q} \cup \mc{P}}^{-1} l_{\mc{Q} \cup \mc{P}} \leq 1, \, \forall k \in \bar{\mc{Q}}\\
			& (FB(\delta))_k G_{\mc{Q} \cup \mc{P}}^{-1} l_{\mc{Q} \cup \mc{P}} \leq 1 - (FA(\delta)x_i)_k, \, \forall k \in \bar{\mc{P}}
		\end{aligned}
		\right.
	\end{equation}
	with $\bar{\mc{Q}} \coloneqq \{1,\ldots,q\} \setminus \mc{Q}$, and $\bar{\mc{P}} \coloneqq \{1,\ldots,p\} \setminus \mc{P}$.
	\hfill$\square$
\end{lemma}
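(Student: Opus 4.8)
The plan is to reduce the claim, via the $K=1$ specialization, to a textbook basic-feasible-solution characterization of \gls{LP} feasibility. For $K=1$ the affine policy is evaluated only at the single sample $\delta$, and for any target $v_i\in\R^m$ the choice $C_i=0$, $d_i=v_i$ gives $C_i\delta+d_i=v_i$; hence $\{\bs{C}\delta+\bs{d}\mid(\bs{C},\bs{d})\in\mc{M}\}=\R^{mN}$, so $\mc{L}_\delta$ (the $\argmin$ of a constant over a polyhedron) is nonempty if and only if that polyhedron is nonempty. Since the constraint in \eqref{eq:linear_prog_affine} decouples across the vertices $i\in\mc{V}$, this is in turn equivalent to: for every $i\in\mc{V}$ the polyhedron $\mc{P}_i\coloneqq\{v\in\R^m\mid Hv\le\bsone_q,\ FB(\delta)v\le\bsone_p-FA(\delta)x_i\}$ is nonempty. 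Writing $G\coloneqq\col(H,FB(\delta))$ and $l\coloneqq l^{(i)}\coloneqq\col(\bsone_q,\bsone_p-FA(\delta)x_i)$ (the vertex-$i$ instance of the data appearing in the statement), this reads $\{v\mid Gv\le l\}\ne\emptyset$, and note that $G$ has full column rank $m$ since $H$ does.

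For the implication $(\Leftarrow)$, I would take $\mc{Q}$, $\mc{P}$ and the invertible $G_{\mc{Q}\cup\mc{P}}$ supplied by the hypothesis and set $v_i^\star\coloneqq G_{\mc{Q}\cup\mc{P}}^{-1}l_{\mc{Q}\cup\mc{P}}$. Then $G_{\mc{Q}\cup\mc{P}}v_i^\star=l_{\mc{Q}\cup\mc{P}}$, so the rows indexed by $\mc{Q}\cup\mc{P}$ hold with equality, while \eqref{eq:iff_conditions} asserts exactly that the remaining rows — those of the $H$-block indexed by $\bar{\mc{Q}}$ and those of the $FB(\delta)$-block indexed by $\bar{\mc{P}}$ — are satisfied as well. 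Hence $v_i^\star\in\mc{P}_i$, so every $\mc{P}_i$ is nonempty and $(\bs{C},\bs{d})$ with $C_i=0$, $d_i=v_i^\star$ belongs to $\mc{L}_\delta$.

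For the implication $(\Rightarrow)$, assume $\mc{L}_\delta\ne\emptyset$; then each $\mc{P}_i$ is nonempty, and since $G$ has full column rank $m$ the polyhedron $\mc{P}_i$ contains no line, hence it has a vertex $v_i^\star$. At $v_i^\star$ the submatrix of $G$ formed by the active rows has rank $m$; I would select $m$ linearly independent active rows and group their indices into $\mc{Q}\subseteq\{1,\dots,q\}$ (coming from the $H$-block) and $\mc{P}\subseteq\{1,\dots,p\}$ (coming from the $FB(\delta)$-block), so that $G_{\mc{Q}\cup\mc{P}}\in\R^{m\times m}$ is invertible with $G_{\mc{Q}\cup\mc{P}}v_i^\star=l_{\mc{Q}\cup\mc{P}}$, i.e. $v_i^\star=G_{\mc{Q}\cup\mc{P}}^{-1}l_{\mc{Q}\cup\mc{P}}$. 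Feasibility of $v_i^\star$ in $\mc{P}_i$, restricted to the complementary rows $\bar{\mc{Q}}$, $\bar{\mc{P}}$, then yields precisely \eqref{eq:iff_conditions}.

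The one step that requires care is the extraction of a square invertible ``basis'' at a possibly degenerate vertex: one must argue that at a vertex the active-constraint submatrix has rank $m$ (which follows from $G$ having full column rank together with the definition of a vertex) and then choose $m$ independent active rows while bookkeeping which come from $H$ and which from $FB(\delta)$, so that the $\mc{Q}\cup\mc{P}$ partition in the statement is respected; the standing assumptions (in particular $n\ge m$ and the full column rank of $F$ and $H$) are what make such a basis available whenever a vertex exists. Everything else is routine manipulation of the inclusions $v_i^\star\in\mathscr{U}$ and $A(\delta)x_i+B(\delta)v_i^\star\in\mc{S}$.
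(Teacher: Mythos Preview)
Your proposal is correct and follows the same overall reduction as the paper: decouple across the vertices of $\mc{S}$, observe that for $K=1$ the affine policy degenerates to choosing an arbitrary $u_i\in\R^m$ (via $C_i=0$, $d_i=u_i$), and then characterize feasibility of the per-vertex system $Gu\le l$ with $G=\col(H,FB(\delta))$ of full column rank $m$.

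The only methodological difference is in how the last step is justified. The paper invokes Chernikov's classical determinantal criterion for solvability of $Gu\le l$ and then simplifies the resulting bordered-determinant condition via the Schur complement to obtain $(G)_k G_{\mc{I}}^{-1}l_{\mc{I}}\le l_k$. You instead use the basic-feasible-solution viewpoint: since $G$ has full column rank the polyhedron is pointed, hence has a vertex, and at that vertex you extract $m$ linearly independent active rows. Both routes produce exactly the same invertible submatrix $G_{\mc{Q}\cup\mc{P}}$ and the same residual inequalities \eqref{eq:iff_conditions}; yours is arguably the more elementary and self-contained argument, while the paper's is shorter once the external reference is granted.
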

\begin{proof}
	See Appendix.
\end{proof}

Extending the conditions established in Lemma~\ref{lemma:singleLP_feasibility} to the general case of $K \in \N$ is, however, nontrivial. In fact, from \eqref{eq:LP_complete} it is evident that, with the same pair $(\bs{C}, \bs{d})$, one has to satisfy the inequality $G(\delta^{(j)}) (\bs{C} \delta^{(j)} + \bs{d}) \leq l(\delta^{(j)})$ for all $j \in \mc{K}$, and therefore $(\bs{C} \otimes I_K) \col((\delta^{(j)})_{j \in \mc{K}}) + \bs{d} \otimes \bsone_K \in \prod_{j \in \mc{K}}^{} \mc{U}_{\delta^{(j)}} \subseteq \mathscr{U}^{NK}$. In terms of data matrices, the following statement provides necessary conditions only for the existence of an optimal pair $(\bs{C}^\star_{K}, \bs{d}^\star_{K}) \in \mc{L}_{\delta_K}$, for some $K \in \N$, as they essentially guarantee that $\prod_{j \in \mc{K}}^{} \mc{U}_{\delta^{(j)}} \neq \emptyset$.

\begin{proposition}\label{prop:feasibility_LP}
	Let $n \geq m$, $K \in \N$ and $\delta_K \in \Delta^K$ be any given $K$-multisample with associated pairs of matrices $\{(A(\delta^{(j)}), B(\delta^{(j)}))_{j \in \mc{K}}\}$. Given any C-polytope $\mc{S} \subseteq \mathscr{X}$, the set $\mc{L}_{\delta_K}$ in \eqref{eq:LP_complete} is nonempty only if, for all $(i,j) \in \mc{V} \times \mc{K}$, there exists an invertible submatrix $G_{\mc{Q} \cup \mc{P}} \in \R^{m \times m}$ of $G(\delta^{(j)})$, with row indices as in Lemma~\ref{lemma:singleLP_feasibility}, and subvector $l_{\mc{Q} \cup \mc{P}}$ of $l(\delta^{(j)})$, satisfying the conditions in \eqref{eq:iff_conditions}.
	\hfill$\square$
\end{proposition}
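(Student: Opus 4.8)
The plan is to reduce the claim to the single-sample characterisation of Lemma~\ref{lemma:singleLP_feasibility} by exploiting the product structure $\mc{L}_{\delta_K} = \cap_{j \in \mc{K}} \mc{L}_{\delta^{(j)}}$ exhibited in \eqref{eq:linear_prog_affine}. First I would observe that if $\mc{L}_{\delta_K} \neq \emptyset$ and $(\bs{C}^\star_{K}, \bs{d}^\star_{K}) \in \mc{L}_{\delta_K}$, then by definition of the intersection $(\bs{C}^\star_{K}, \bs{d}^\star_{K}) \in \mc{L}_{\delta^{(j)}}$ for every $j \in \mc{K}$; hence each single-sample feasible set $\mc{L}_{\delta^{(j)}}$ is nonempty. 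The next step is to notice that $\mc{L}_{\delta^{(j)}}$, read off from \eqref{eq:linear_prog_affine}, is exactly the set $\mc{L}_\delta$ treated in Lemma~\ref{lemma:singleLP_feasibility} with $\delta = \delta^{(j)}$ and $K = 1$: for a single sample the decision variables $C_i, d_i$ enter only through $u_i = C_i \delta^{(j)} + d_i$, the vertices decouple, and the affine map $(C_i, d_i) \mapsto u_i$ is surjective onto $\R^m$, so nonemptiness of $\mc{L}_{\delta^{(j)}}$ is equivalent to feasibility of the per-vertex polyhedron $\{u \in \R^m \mid G(\delta^{(j)}) u \leq l(\delta^{(j)})\}$ for every $i \in \mc{V}$, with $G$ and $l$ built as in Lemma~\ref{lemma:singleLP_feasibility}.

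Then I would invoke the ``only if'' direction of Lemma~\ref{lemma:singleLP_feasibility} — necessity is all that is needed here, which is also why the standing hypothesis $n \geq m$ is retained: applied to the sample $\delta^{(j)}$ it yields, for each $i \in \mc{V}$, an invertible submatrix $G_{\mc{Q} \cup \mc{P}} \in \R^{m \times m}$ of $G(\delta^{(j)})$ with row indices $\mc{Q} \subset \{1,\dots,q\}$, $\mc{P} \subset \{1,\dots,p\}$ and associated subvector $l_{\mc{Q}\cup\mc{P}}$ of $l(\delta^{(j)})$ satisfying \eqref{eq:iff_conditions} (this is precisely a basic-feasible-solution certificate: the candidate vertex $G_{\mc{Q}\cup\mc{P}}^{-1} l_{\mc{Q}\cup\mc{P}}$ must respect the remaining inequalities, split according to the rows of $H$ and of $FB(\delta^{(j)})$). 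Taking the conjunction of these statements over all pairs $(i,j) \in \mc{V} \times \mc{K}$ gives exactly the asserted necessary condition, completing the argument.

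The step I would flag is not a computational obstacle but a conceptual one, namely making transparent why the implication is one-directional for $K \geq 2$. The per-$(i,j)$ conditions in \eqref{eq:iff_conditions} certify only that each factor $\mc{U}_{\delta^{(j)}}$ — equivalently, each single-sample LP in \eqref{eq:linear_prog_affine} — is nonempty, hence that $\prod_{j \in \mc{K}} \mc{U}_{\delta^{(j)}} \neq \emptyset$; but $\mc{L}_{\delta_K} \neq \emptyset$ additionally demands that one affine map $\delta \mapsto \bs{C}\delta + \bs{d}$ simultaneously land in $\mc{U}_{\delta^{(j)}}$ for all $j$, i.e.\ that the sample-dependent admissible inputs admit a common affine interpolant, a coupling across samples that the separate row-selection conditions do not encode. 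Apart from this clarification, the only real care needed is the notational bookkeeping between the compact Kronecker form \eqref{eq:LP_complete}/\eqref{eq:linear_prog_affine} and the per-sample, per-vertex form addressed by Lemma~\ref{lemma:singleLP_feasibility}.
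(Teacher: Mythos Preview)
Your proposal is correct and follows essentially the same route as the paper: use the intersection structure $\mc{L}_{\delta_K} = \cap_{j \in \mc{K}} \mc{L}_{\delta^{(j)}}$ to reduce nonemptiness to that of each single-sample set, then invoke Lemma~\ref{lemma:singleLP_feasibility} per sample and per vertex. The paper's proof is terser (it phrases the reduction via the block-diagonal system $\diag((G(\delta^{(j)}))_{j}) \bs{\tilde{u}} \leq \col((l(\delta^{(j)}))_{j})$ decoupling into individual LPs), but the logical content is the same; your additional remark on why sufficiency fails for $K \geq 2$ matches the discussion the paper places immediately after the proposition.
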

\begin{proof}
	See Appendix.
\end{proof}

Proposition~\ref{prop:feasibility_LP} is only necessary for $\mc{L}_{\delta_K} \neq \emptyset$. In fact, if some $\bs{\tilde{u}} \coloneqq \col((\bs{u}_j)_{j \in \mc{K}}) \in \R^{mNK}$ satisfying $\diag((G(\delta^{(j)}))_{j \in \mc{K}}) \bs{\tilde{u}} \leq \col((l(\delta^{(j)}))_{j \in \mc{K}})$ exists, say $\bs{\tilde{u}}^\star$, then this does not imply that we are able to find a pair $(\bs{C}^\star_{K}, \bs{d}^\star_{K})$ such that $(\bs{C}^\star_{K}\otimes I_K) \col((\delta^{(j)})_{j \in \mc{K}}) + \bs{d}^\star_{K} \otimes \bsone_K = \bs{\tilde{u}}^\star$. 

To generalize Theorem~\ref{th:prob_cert} to the case where $\mc{L}_{\delta_K}$ might be empty and $K \geq 1$, let $\mc{F}_K \coloneqq \{\delta_K \in \Delta^K \mid \mc{L}_{\delta_K} \neq \emptyset\}$. The bound in \eqref{eq:prob_feas_boud} holds then with $\mc{F}_K$ in place of $\Delta^K$, implying that, if the resulting \gls{LP} is feasible, then the probability of violation is at least $\varepsilon(s_K)$ with confidence at most $\beta$ \cite{calafiore2006scenario,margellos2015connection}. In other words, $\mc{F}_K$ is the restriction of $\Delta^K$ to the $K$-multisamples for which the \gls{LP} in \eqref{eq:LP_complete} is feasible.

\section{Motivating example revisited}\label{sec:num_sim}

\begin{figure}[t!]
	\centering
	\includegraphics[width=.93\columnwidth]{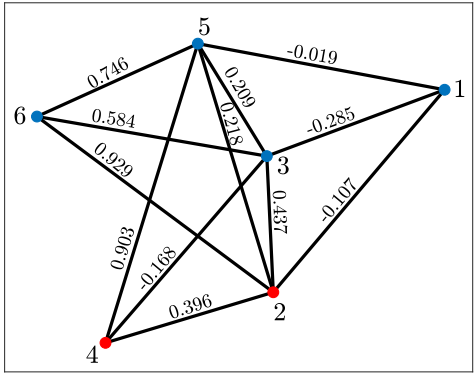}
	\caption{Graph topology with nominal weights on the edges (black lines). The blue dots denote the floating nodes, while the red dots the input ones.}
	\label{fig:graph_top}
\end{figure}

We illustrate our findings on a numerical instance of the control problem introduced in \S \ref{sec:motivating}. Specifically,
we consider the graph topology represented in Fig~\ref{fig:graph_top}, involving $n = 6$ agents, with $\mc{N}_F = \{1,3,5,6\}$,  $\mc{N}_I = \{2,4\}$, and $|\mc{E}| = 12$ edges with nominal weights $\bar{w}$ specified on each link. In this case, the autonomous dynamics in \eqref{eq:LTI_network} associated to the $n_F = 4$ floating nodes (i.e., with $B_F = 0$) is characterized by $\mathrm{eig}(A_F) = \{-1.34, -0.01, 0.46, 0.81\}$, hence unstable. Additionally, we constraint $m = 2$ control inputs to the set $\mathscr{U} = \{u \in \R^2 \mid \|u\|_{\infty} \leq 1\}$. For simplicity, $\mc{S}$ is taken as the convex hull of random points in $\pm [0.1 ~ 2]$, sampled individually on each axis of $\R^4$, leading to a C-polytope with $N = 8$ vertices.
By assuming that the entire vector of weights is not deterministically known, i.e., $\ell = |\mc{E}|$, we treat $w$ as a random vector and draw $K = 600$ samples according to a uniform distribution supported on $\mc{W} = [0.6 ~ 1.4] \times \bar{w} \subset \R^{12}$, i.e., a degree of uncertainty on $\bar{w}$ up to the $40\%$, and we compute an optimal pair $(\bs{C}^{\star}_{600}, \bs{d}^{\star}_{600})$ by solving \eqref{eq:LP_complete} with cost function $\|\bs{C}\|^2_F + \|\bs{d}\|^2$. 
The greedy algorithm designed in \cite[\S II]{campi2018general} returns a support subsample of cardinality $s_{600} = 29$, and therefore, with $\beta = 10^{-6}$, from Theorem~\ref{th:prob_cert} the probability that $\mc{S}$ is a controlled invariant for the floating dynamics in \eqref{eq:LTI_network} is at least $0.7911$, with confidence greater than or equal to $1 - 10^{-6}$. 
The function $\varepsilon(\cdot)$ in Theorem~\ref{th:prob_cert} is analytically obtained by splitting $\beta$ evenly among the $600$ terms within the summation, thus obtaining $\varepsilon(29) = 0.2089$. 

\begin{figure}[t!]
	\centering
	\includegraphics[width=.93\columnwidth]{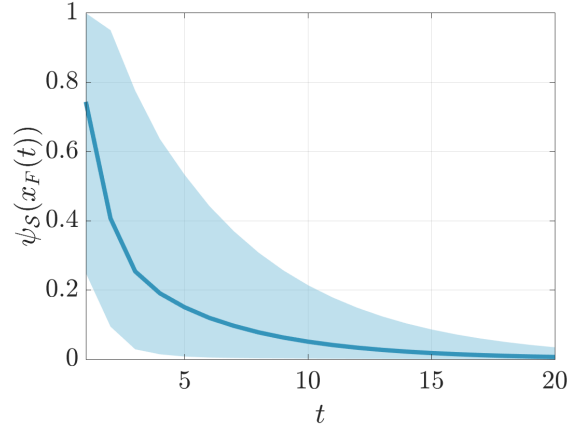}
	\caption{Time evolution of the Minkowski function associated to the C-polytope $\mc{S}$, obtained from each closed-loop trajectory of the nominal dynamics in \eqref{eq:LTI_network} originating from $10^3$ randomly chosen initial conditions inside $\mc{S}$, and control law in \eqref{eq:vertex_law} with admissible inputs $\{C^\star_{i,600} \bar{w} + d^\star_{i,600}\}_{i \in \mc{V}}$.}
	\label{fig:min_fun}
\end{figure}


Moreover, according to Corollary~\ref{cor:prob_cert_law}, the vertex control law in \eqref{eq:vertex_law}, with input at the vertices $\{C^\star_{i,600} \bar{w} + d^\star_{i,600}\}_{i \in \mc{V}}$, also enjoys the same probability certificate of $\mc{S}$. Figure~\ref{fig:min_fun} shows the evolution over time of the Minkowski function \cite[\S 3.3]{blanchini1999set} associated to the C-polytope $\mc{S}$, formally defined as $\psi_{\mc{S}}(x_F) \coloneqq \mathrm{min}_{\lambda \geq 0} \, \{\lambda \mid x_F \in \lambda \mc{S} \}$. By randomly drawing $10^3$ initial points in $\mc{S}$, we compute $\psi_{\mc{S}}(x_F(t))$, where $x_F(t)$ is the closed-loop trajectory originating from each initial state with control law in \eqref{eq:vertex_law} and admissible inputs $\{C^\star_{i,600} \bar{w} + d^\star_{i,600}\}_{i \in \mc{V}}$. The fact that $\psi_{\mc{S}}(x_F(t)) \leq 1$, for all $t \in \N$ and initial condition, indicates that $\mc{S}$ is not only invariant\footnote{This would suffice for any $x_F(0) \in \mathrm{vert}(\mc{S})$, where $\psi_{\mc{S}}(x_F(0)) = 1$.}, but also a contractive set for the dynamics in \eqref{eq:LTI_network}.

\section{Conclusion and outlook}
By combining results in system theory and the scenario approach, we provide out-of-sample certificates on the controlled invariance property of a given set with respect to a black-box \gls{LTI} system whose nominal parameters may not be determined with certainty.
We propose a data-based sampling procedure to select feasible inputs at the vertices of the given set, which allows us to verify the controlled invariance property of such a set through an \gls{LP}. If the \gls{LP} is feasible, we establish probabilistic bounds on the controlled invariance property of the given set \gls{wrt} the nominal \gls{LTI} system.

Directions for future work include considering different sampling policies and extending the controlled invariance property verification of given sets \gls{wrt} broader classes of systems, such as linear systems with polytopic uncertainty.

\appendix
\emph{Proof of Lemma~\ref{lemma:singleLP_feasibility}:}
	The Kronecker product  in the matrix $G$ in \eqref{eq:linear_prog_affine} induces a decoupled structure that allows us to focus on a single vertex $i \in \mc{V}$ at a time: the generalization to the entire set $\mc{S}$ follows readily.
	For some $v \in \mathrm{vert}(\mc{S})$, consider $G = \col(H, F B(\delta)) \in \R^{(q+p) \times m}$ and $l = \col(\bsone_{q}, \bsone_{p} - FA(\delta)v) \in \R^{q+p}$. Since $H$ and $F$ are full column rank matrices, we also have $\rank(G) = m$, as $m < q+p$, and the vertical concatenation does not alter the rank (note that $\rank(FB(\delta)) \leq m$, as $n \geq m$). From \cite{chernikov1968linear}, a system of inequalities $G u \leq l$, with $\mathrm{rank}(G) = m$, admits a solution if and only if $G$ has a minor $\theta_m = \mathrm{det}(G_\mc{I}) \neq 0$ of order $m$, with $G_\mc{I} \in \R^{m \times m}$ being full rank submatrix of $G$ with row indices $\mc{I} \subset \{1, \ldots, q+p\} \eqqcolon \mc{A}$, such that
	\begin{equation}\label{eq:iff_generic}
		-\frac{1}{\theta_m} \det\left(
		\left[\begin{array}{c|c}
			G_\mc{I} & l_\mc{I}\\
			\hline
			(G)_k  & l_k
		\end{array}
		\right]
		\right) \leq 0, \, \forall k \in  \mc{A} \setminus \mc{I}.
	\end{equation}
	Since $G_\mc{I}$ is a full rank matrix, the determinant of the augmented matrix in \eqref{eq:iff_generic} can be rewritten as $\det(G_\mc{I}) \times \det(l_k - (G)_k G^{-1}_\mc{I} l_\mc{I})$ \cite{horn2012matrix}. This then implies that \eqref{eq:iff_generic} amounts to verify $(G)_k G^{-1}_\mc{I} l_\mc{I} \leq l_k, \, \forall k \in  \mc{A} \setminus \mc{I}$.
	Note that such inequalities guarantee the existence of some $u^\star$ that solves $G u^\star \leq l$. In case $K = 1$, this is equivalent to guaranteeing the existence of some pair $(C, d)$ satisfying $G (C \delta + d) \leq l$, since $C = 0$ and $d = u^\star$ is always a feasible solution. This consideration holds for each $v \in \mathrm{vert}(\mc{S})$, as $\bs{C} \coloneqq \col((C_i)_{i \in \mc{V}})$ and $\bs{d} \coloneqq \col((d_i)_{i \in \mc{V}})$.
	Therefore, in view of the structure of $G$, we can rewrite the set of row indices as $\mc{I} \coloneqq \mc{Q} \cup \mc{P}$, with $\mc{Q} \subset \{1,\ldots,q\}$ and $\mc{P} \subset \{1,\ldots,p\}$. Then, $G_\mc{I} = \col(H_\mc{Q}, FB_{\mc{P}})$ and $l_\mc{I} = \col(\bsone_{|\mc{Q}|}, \bsone_{|\mc{P}|} - (FA(\delta)v)_\mc{P})$, for any $v \in \mathrm{vert}(\mc{S})$. Finally, the conditions in \eqref{eq:iff_conditions} follow by splitting inequalities $(G)_k G^{-1}_\mc{I} l_\mc{I} \leq l_k, \, \forall k \in  \mc{A} \setminus \mc{I}$, between the two sets $\mc{Q}$ and $\mc{P}$, and noting that $(G)_k = (H)_k$ and $l_k = 1$ for any $k \in \{1,\ldots,q\} \setminus \mc{Q}$, while $(G)_k = (FB(\delta))_k$ and $l_k = 1 - (FA(\delta)v)_k$), for any $k \in \{1,\ldots,p\} \setminus \mc{P}$
	\hfill$\blacksquare$

\smallskip

\emph{Proof of Proposition~\ref{prop:feasibility_LP}:}
		With $\bs{\tilde{u}} \coloneqq \col((\bs{u}_j)_{j \in \mc{K}}) \in \R^{mNK}$, a solution to $\diag((G(\delta^{(j)}))_{j \in \mc{K}}) \bs{\tilde{u}}^\star \leq \col((l(\delta^{(j)}))_{j \in \mc{K}})$ exists if one can find an individual $\bs{u}^\star_j \in \R^{mN}$ for each \gls{LP} in \eqref{eq:linear_prog_affine}. Then, the proof follows the same considerations adopted in the one for Lemma~\ref{lemma:singleLP_feasibility}, for each sample $\delta^{(j)} \in \delta_K$.
	\hfill$\blacksquare$


\balance
\bibliographystyle{IEEEtran}
\bibliography{21_LCSS_probstab}

\end{document}